\documentclass[twoside,11pt]{amsart}

\usepackage[english]{babel}
\usepackage[utf8x]{inputenc}
\usepackage{url}
\usepackage{amsmath}
\usepackage{graphicx}
\usepackage[colorinlistoftodos]{todonotes}
\usepackage{amsmath,amscd,amsthm}
\usepackage{amstext, amsfonts, a4}
\usepackage{amssymb}
\usepackage{tikz-cd}
\usepackage{fancyhdr}
\usepackage{enumerate}
\usepackage{cleveref}
\usepackage{xcolor}

\numberwithin{equation}{section}
\newtheorem{thm}[equation]{Theorem}

\newtheorem{prop}[equation]{Proposition}

\newtheorem{lem}[equation]{Lemma}

\theoremstyle{definition}

\newtheorem{ex}[equation]{Example}

\newtheorem{pb}[equation]{Problem}

\renewcommand{\dim}{\operatorname{\mathsf{dim}}}
\renewcommand{\deg}{\operatorname{\mathsf{deg}}}

\newcommand\ind{\operatorname{\mathsf{ind}}}

\newcommand\End{\operatorname{\mathsf{End}}}
\newcommand\Br{\operatorname{\mathsf{Br}}}

\newcommand\id{\operatorname{\mathsf{id}}}

\newcommand\hh{\operatorname{\mathbb{H}}}
\newcommand\N{\operatorname{N}}

\newcommand{\can}{\operatorname{\mathsf{can}}}

\newcommand{\vf}{\varphi}
\newcommand{\mg}[1]{{#1}^{\times}}
\newcommand{\sq}[1]{{#1}^{\times 2}}
\newcommand{\scg}[1]{\mg{#1}/\sq{#1}}
\newcommand{\s}{\sigma}
\newcommand{\nat}{\mathbb{N}}

\renewcommand{\mod}{\,\mathsf{mod}\,}

\newcommand{\la}{\langle}
\newcommand{\ra}{\rangle}
\newcommand{\lla}{\la\!\la}
\newcommand{\rra}{\ra\!\ra}      

\newcommand{\an}{\mathsf{an}}

\renewcommand{\leq}{\leqslant}

\newcommand\Ad{\operatorname{\mathsf{Ad}}}
\newcommand\ad{\operatorname{\mathsf{ad}}}

\newcommand{\I}{\mathsf{I}}

\renewcommand{\N}{\mathsf{N}}

\newcommand{\disc}{\mathsf{disc}}
\newcommand{\Sim}{{\bf\mathsf{Sim}}}
\newcommand{\PSim}{{\bf\mathsf{PSim}}}
\newcommand{\G}{\mathsf{G}}

\newcommand{\Hyp}{\mathsf{Hyp}}

\renewcommand{\setminus}{\smallsetminus}

\renewcommand{\deg}{\mathsf{deg}}
\renewcommand{\dim}{\mathsf{dim}}
\renewcommand{\leq}{\leqslant}

\newcommand{\wi}{\mathsf{i}}

\renewcommand{\setminus}{\smallsetminus}

\makeatletter
\newcommand{\bigperp}{%
  \mathop{\mathpalette\bigp@rp\relax}%
  \displaylimits
}

\newcommand{\bigp@rp}[2]{%
  \vcenter{
    \m@th\hbox{\scalebox{\ifx#1\displaystyle2.1\else1.5\fi}{$#1\perp$}}
  }%
}
\makeatother

\title[$R$-triviality for projective similitudes]{$R$-triviality for adjoint classical groups of type $C$}

\date{26 February, 2026}

\author{M.~Archita}

\address{University of Antwerp, Department of Mathematics, Antwerp, Belgium.}

\email{archita.mondal@uantwerpen.be}

\begin{document}

\maketitle

\begin{abstract}
    For a central simple algebra with a symplectic involution $(A,\s)$ over a field of characteristic different from $2$, we show that its group of projective similitudes ${\bf PSim}(A,\s)$ is $R$-trivial in two new cases.

\medskip\noindent
{\sc Keywords:} 
Classical adjoint  algebraic group, stably rational, $R$-trivial, quadratic form, hyperbolic, Pfister form, symplectic involution, quaternion algebra

\medskip\noindent
{\sc Classification (MSC 2020):} 11E04, 
11E57, 
11E81, 
14E08, 
20G15 
\end{abstract}

\section{Introduction}

Let $K$ be a field of characteristic different from $2$. 
By Weil's classification \cite{Wei61}, any absolutely simple adjoint algebraic group of type $C_n$ over $K$ is isomorphic
to the algebraic group of projective similitudes ${\bf PSim}(A,\s)$ for a central simple algebra $A$ of degree $2n$ over $K$ endowed with a symplectic involution $\s$. In \cite[\S 2]{Mer96}, it is shown that ${\bf PSim}(A,\s)$ is stably rational, and in particular $R$-trivial, when $n$ is odd.

In this article we  study the $R$-triviality of ${\bf PSim}(A,\s)$ when $n$ is even. In \cite[\S 2]{Mer96}, it is shown that the such groups are rational when $n= 2$. 
When $n=4$ and $\ind(A)\leq 2$ we will show that ${\bf PSim}(A,\s)$ is $R$-trivial (\Cref{4}). On the other hand, Berhuy, Monsurrò and Tignol \cite[Cor.~9]{BMT04} gave an example with $n=4$ and $\ind(A)=4$ where ${\bf PSim}(A,\s)$ is not $R$-trivial; see \Cref{8-odd multiple}.
When $n=6$, we establish $R$-triviality of ${\bf PSim}(A,\s)$ in the case where $\s$ has trivial discriminant (\Cref{6}).
\section{Preliminaries}
 For a finite field extension $L/K$, we write 
$\N_{L/K}:L\to K$ for the norm map, and we abbreviate $$\N_{L/K}^\ast=\N_{L/K}(\mg{L})\,.$$

For standard terminology and basic facts from quadratic forrm theory, we refer to \cite{Lam05}. Let $\vf = (V, B)$ be a quadratic form over $K$, where $V$ is a finite-dimensional $K$-vector space and $B$ a symmetric bilinear form $B:V\times V \rightarrow K$.   
The \emph{dimension of $\vf$} is the dimension of the vector space $V$ over $K$.
If $B$ is non-degenerate then $\vf$ is called {\it regular}. In this article, by a \emph{quadratic form} we mean a regular quadratic form. A quadratic form of dimension $n$ is equivalent to a diagonal form $a_1 X_1^2 + a_2 X_2^2 + \cdots + a_n X_n^2$, 
with $a_1,\dots,a_n \in \mg{K}$, which is denoted by $\langle a_1, \dots , a_n \rangle$. We denote by $\disc(\vf)$ the \emph{discriminant of $\vf$}, given as the class $(-1)^{\binom{n}{2}}a_1\dots a_n\sq{K}$ in the square-class group $\scg{K}$ for an arbitrary diagonalisation $\la a_1,\dots,a_n\ra$ of $\vf$.
We denote by $\vf_\an$ the anisotropic part and by $\wi(\vf)$ the Witt index of $\vf$.
They are determined by having that $\vf_\an$ is anisotropic and 
$\vf\simeq \vf_\an\perp \wi(\vf)\times \hh$, where $\hh$ denotes the hyperbolic plane over $K$.
We call $\vf$ \emph{isotropic} if $\wi(\vf)>0$, and \emph{anisotropic} otherwise.
We call $\vf$ \emph{hyperbolic} if $\vf_\an$ is trivial, i.e. if $\vf\simeq i\times \hh$ for some $i\in\nat$.  
We denote $\G(\vf)=\{a\in\mg{K}\mid a\vf\simeq \vf\}$, which is a subgroup of $\mg{K}$.
For $n\in\nat$ and $a_1,\dots,a_n\in\mg{K}$, we denote by $\lla a_1,\dots,a_n\rra$ the $n$-fold Pfister form $\la 1,-a_1\ra\otimes\dots\otimes\la 1,-a_n\ra$ over $K$.

Let $A$ be a $K$-algebra. 
We call $A$ a \emph{division algebra} if every nonzero element of $A$ is invertible. 
The center of $A$ is denoted by $Z(A)$. We call $A$ a \emph{central simple $K$-algebra} if $Z(A)=K$, $A$ is finite-dimensional over $K$ and the only two sided ideals of $A$ are $0$ and $A$. Note that, $\dim_{K}(A)=n^2,$ for some positive integer $n$, and we set $\deg(A)=n$. By Wedderburn's theorem, $A\simeq M_r(D)$, for some $r\in \nat$ and where $D$ is a finite-dimensional central division $K$-algebra (unique up to $K$-isomorphism). When $D=K$, we say $A$ is a \emph{split}. We call the degree of $D$ as a $K$-algebra the {\it index of} $A$ and it is denoted by $\ind(A)$. 

A \emph{$K$-involution} (or \emph{involution of the first kind}) on $A$ is a $K$-linear map $\sigma:A\rightarrow A$  such that $\s(xy)=\s(y)\s(x)$ for every $x,y\in A$ and $\s^2=\id_A$.
We call a pair $(A,\s)$ of a central simple $K$-algebra $A$ with a $K$-involution $\s$ a \emph{$K$-algebra with involution}.

Involutions of the first kind are distinguished into two \emph{types}, \emph{orthogonal} and \emph{symplectic}, cf. \cite[\S 2]{KMRT98}. Our focus here lies on symplectic involutions. They are characterized by the fact that they become adjoint to an alternating bilinear form after a field extension.

Consider a quadratic form $\vf=(V,B)$. Consider the split central simple $K$-algebra
$\End_{K} (V)$ and $B:V\times V\rightarrow K$ is a symmetric bilinear form. Let $\ad_B : \End_{K} (V) \rightarrow \End_{K} (V)$ denote the involution determined by the formula
$$B( f (u), v) = B(u, \ad_B( f )(v))$$ for all $u, v \in V$ and $f \in \End_{K} (V)$.
We call $\ad_B$ the \emph{adjoint involution of $\vf$}. Furthermore, we put $\Ad(\vf)=(\End_K (V), \ad_B)$ and call it the \emph{adjoint algebra with involution of $\vf$}.

Let $(A, \sigma)$ be an $K$-algebra with involution. An element $a\in \mg{A}$ is said to be a 
{\it similitude of $\s$} if $\sigma(a)a \in \mg{K}$. 
The similitudes of $(A, \sigma)$ form a group, which is denoted by 
$\Sim(A, \sigma)$.
In $\mg{K}$, we consider the subgroup
$$\G(A,\s)=\{\s(x)x\mid x\in\mg{A}\}\cap\mg{K}\,.$$
The elements of this group are called the \emph{multipliers of similitude of $(A,\s)$}.
We set $\PSim(A,\s)=\Sim(A,\s)/\mg{K}$ and call its elements \emph{projective similitudes of $(A,\s)$}.
We refer to \cite[\S12]{KMRT98} for a discussion of similitudes and their multipliers.

For a field extension $K'/K$ we denote by $(A_{K'}, \s_{K'})=(A\otimes_{K}K', \s\otimes\id_{K'})$ the algebra with involution $(A,\s)$ when considered over the field $K'$. 
Letting ${\bf PSim}(A,\s)(K')=\PSim(A_{K'},\s_{K'})$ for $K'/K$ defines an algebraic group ${\bf PSim} (A,\sigma)$ over $K$, now as a $K$-variety whose $K$-rational points are given by $\PSim(A,\s)$.

Let $(A,\s)$ be $K$-algebra with involution.
We call $\s$ \emph{hyperbolic} if there exists an element $e\in A$ such that $e^2=e$ and $\s(e)=1-e$.

We write $\Hyp(A,\s)$ for the subgroup of $\mg{K}$ generated by the sets $\N_{L/K}^\ast$ where $L/K$ ranges over all finite field extensions  such that $\s_L$ hyperbolic.

 The following theorem due to 
Merkurjev characterises the 
group of 
$R$-equiva\-lence classes of 
${\bf PSim} (A, \s)$ in terms of the multipliers 
of similitudes of $(A, \s)$. 

\begin{thm}[Merkurjev]\label{mer}
We have  $\sq{K}\cdot\Hyp(A,\s)\subseteq \G(A,\s)$ and 
$$ 
{\bf PSim} (A,\s)(K)/R \simeq  
\G(A,\s)/ \sq{K}\cdot\Hyp(A,\s) .  
$$ In particular, the group ${\bf PSim} (A,\s)$ is $R$-trivial if and only if,  for every field extension $K'/K$, one has $\G(A_{K'},\s_{K'})=\sq{K'}\cdot\Hyp(A_{K'},\s_{K'})$.
\end{thm}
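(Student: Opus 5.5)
The plan is to define the comparison map through the multiplier and then check, in order, that it is well defined, surjective, and that its kernel is exactly the $R$-trivial classes. For $x\in\Sim(A,\s)$ write $\mu(x)=\s(x)x\in\mg{K}$. Scaling $x$ by $\lambda\in\mg{K}$ multiplies $\mu(x)$ by $\lambda^2$, so $\mu$ induces a surjective homomorphism
$$\bar\mu:\PSim(A,\s)\to \G(A,\s)/\sq{K}.$$
Its kernel is the image of the elements with $\mu(x)\in\sq{K}$; after rescaling these lie in the symplectic group $\mathbf{Sp}(A,\s)$. I would then compose $\bar\mu$ with the projection onto $\G(A,\s)/\sq{K}\cdot\Hyp(A,\s)$, show that this composite factors through $R$-equivalence, and show that the induced map is injective.

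\emph{Inclusion $\sq{K}\cdot\Hyp(A,\s)\subseteq\G(A,\s)$.} Squares are the multipliers of scalars. Now let $L/K$ be finite with $\s_L$ hyperbolic. Then $(A_L,\s_L)$ has a hyperbolic idempotent $e$, and for $c\in\mg{L}$ the element $x=ce+(1-e)$ satisfies $\s_L(x)x=c$. Hence $\G(A_L,\s_L)=\mg{L}$. It then remains to apply the norm principle for the multiplier map $\mathbf{Sim}(A,\s)\to\mathbf{G}_m$, namely
$$\N_{L/K}\bigl(\G(A_L,\s_L)\bigr)\subseteq\G(A,\s).$$
For symplectic involutions this is Merkurjev's norm principle. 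I would prove it via a transfer argument: realise $\s$ as adjoint to a hermitian form over the quaternion-type division algebra, and use that multipliers of hermitian forms over the generic point of a line have the norm property, by the Knebusch-type norm argument. \textbf{I expect this norm principle to be the main obstacle.}

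\emph{$R$-trivial classes map into $\sq{K}\cdot\Hyp(A,\s)$.} Let $t\mapsto x(t)$ be a rational curve in $\mathbf{PSim}$ with $x(0)=1$. I would lift it to $\mathbf{Sim}$ over $K(t)$, which is possible after rescaling since $H^1(K(t),\mathbf{G}_m)=0$. Then $\mu(x(t))\in K(t)^\times$, and I would evaluate at $t=1$. The specialisation argument should show that $\mu(x(1))$ is a product of a square and norms from the residue fields of the zeros and poles of $\mu(x(t))$. At such a closed point $P$ the similitude degenerates, and I would aim to show that this forces $\s_{K(P)}$ to be hyperbolic: a singular limit of similitudes whose multiplier vanishes has an image that is a totally isotropic half-space, giving a hyperbolic idempotent.

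\emph{Injectivity.} Suppose $\mu(x)\in\sq{K}\cdot\Hyp(A,\s)$. By the hyperbolic construction above, each norm $\N_{L/K}(c)$ with $\s_L$ hyperbolic is realised by a similitude. I would choose that similitude on a rational family, using the Weil restriction of the rational torus $\{ce+(1-e)\}$, so that it is $R$-equivalent to $1$. Multiplying $x$ by these similitudes reduces to the case $\mu(x)\in\sq{K}$. There, $x$ lies in $\mathbf{Sp}(A,\s)\cdot\mg{K}$, and $\mathbf{Sp}(A,\s)$ is rational via the Cayley parametrisation $y\mapsto(1-y)(1+y)^{-1}$ on $\s$-skew elements, so $x$ is $R$-equivalent to $1$.

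\emph{Final assertion.} $R$-triviality of the group means that $\mathbf{PSim}(A,\s)(K')/R$ is trivial for every field extension $K'/K$. Applying the isomorphism over each $K'$ turns this into the equality $\G(A_{K'},\s_{K'})=\sq{K'}\cdot\Hyp(A_{K'},\s_{K'})$ for all $K'$, as stated.
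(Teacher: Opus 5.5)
The paper does not prove this statement itself; it only cites Merkurjev. So your outline has to stand on its own, and it breaks at the injectivity step. There you need $\Hyp(A,\s)\subseteq\mu\bigl(R\mathbf{Sim}(A,\s)(K)\bigr)\cdot\sq{K}$, where $R\mathbf{Sim}(A,\s)(K)$ is the group of similitudes $R$-equivalent to $1$.

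The Weil restriction device does not give this. The torus $\{ce+(1-e)\}$ exists only over $L$, because the hyperbolic idempotent $e$ lives in $A_L$. Its Weil restriction is a rational $K$-torus inside $R_{L/K}\mathbf{Sim}(A_L,\s_L)$, not inside $\mathbf{Sim}(A,\s)$. Since $\mathbf{Sim}(A,\s)$ is not commutative, there is in general no norm map $R_{L/K}\mathbf{Sim}(A_L,\s_L)\to\mathbf{Sim}(A,\s)$ lifting $\N_{L/K}$ on multipliers; this is why the norm principle is a theorem and not a formality.

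The norm principle only hands you some $y\in\mathbf{Sim}(A,\s)(K)$ with $\mu(y)=\N_{L/K}(c)$. Any two such $y$ differ by an element of the $R$-trivial group $\mathbf{Sp}(A,\s)(K)$, so ``choosing $y$ on a rational family'' is literally the inclusion you are trying to prove. This is the substantive half of Merkurjev's theorem, and it needs a form of the norm principle that keeps track of $R$-equivalence.

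One possible route is the following. The element $1+t(c-1)$ is a multiplier of the hyperbolic involution $\s_{L(t)}$, so the norm principle over $K(t)$ yields a similitude $x(t)$ with multiplier $q(t)=\N_{L(t)/K(t)}(1+t(c-1))$. Since $q(0)=1$ and $q(1)=\N_{L/K}(c)$ are nonzero, it remains to show that $x(t)$ can be chosen regular and invertible at $t=0$ and $t=1$. This could be done, e.g., by a Grothendieck--Serre type argument for the $\mathbf{Sp}(A,\s)$-torsor $\mu^{-1}(q)$ over the semilocal ring of $\mathbb{A}^1_K$ at $\{0,1\}$. Then $x(t)x(0)^{-1}$ is a rational curve from $1$ to an element with multiplier $\N_{L/K}(c)$.

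The specialisation step is also wrong as stated, though it is repairable. Zeros and poles of even order only contribute squares, so only odd order matters. But for odd order at least $3$, the reduction of the normalised similitude need not have half-dimensional image. For instance, let $u_1,u_2,w_1,w_2$ be a symplectic basis of $K^4$. The map $u_1\mapsto u_1$, $u_2\mapsto tu_2$, $w_1\mapsto t^3w_1$, $w_2\mapsto t^2w_2$ is a similitude with multiplier $t^3$ whose reduction at $t=0$ has rank $1$.

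The right object is a different ideal. Suppose the order at $P$ is $2k+1$, let $\pi$ be a uniformiser and $\mathcal{A}=A\otimes_K\mathcal{O}_P$, and take the right ideal $\mathcal{A}\cap\pi^{-k}x\mathcal{A}$. Its reduction $\bar I$ satisfies $\s(\bar I)\bar I=0$ and has reduced dimension $\deg(A)/2$, so $\s_{K(P)}$ is hyperbolic. You can check this after splitting $\mathcal{A}$ by an \'etale extension, where it becomes an elementary divisor count. Alternatively, use that $\lambda\in\G(A,\s)$ if and only if $\Ad(\lla\lambda\rra)\otimes(A,\s)$ is hyperbolic, and take residues.

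Finally, in your sketch of the norm principle, $\ind(A)$ can be any power of $2$. So the division algebra carrying the hermitian form need not be a quaternion algebra.
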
 
\begin{proof}
    See \cite[Theorem $1$]{Mer96}.
\end{proof}

\section{Main Results}

A \emph{$K$-quaternion algebra} is a central simple $K$-algebra of degree $2$. Consider a $K$-quaternion algebra $Q$. Denote the {\it canonical involution} on $Q$ by $\can_Q$. It is a $K$-involution of symplectic type. We denote the norm form of $Q$ by $\N_Q$ and recall that $\N_Q$ is a $4$-dimensional quadratic form over $K$.

\begin{prop}\label{initial}
Let $Q$ be a $K$-quaternion  algebra and $\vf$ a quadratic form over $K$. Let $\pi=\N_Q$.
Let  $(A,\s)= \Ad(\vf)\otimes (Q,\can_Q)$. Then $\G(A,\s)=\G(\vf\otimes \pi)$ and $\Hyp(A,\s)=\Hyp(\vf\otimes \pi)$. Furthermore, for any field extension $K'/K$, we have $${\bf PSim}(A,\s)(K')/R\simeq {\bf PSim}(\vf\otimes\pi)(K')/R\,.$$
\end{prop}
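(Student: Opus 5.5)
The plan is to translate everything into hermitian forms over $(Q,\can_Q)$ and then use Jacobson's correspondence between such hermitian forms and quadratic forms divisible by $\pi=\N_Q$.

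First, write $\vf=(V,B)$ and put $M=V\otimes_K Q$, a free right $Q$-module. Let $h$ be the hermitian form on $M$ over $(Q,\can_Q)$ obtained by extending $B$, that is, $h=\vf\otimes\la1\ra_Q$ in a diagonal basis. Then $(A,\s)=\Ad(\vf)\otimes(Q,\can_Q)$ is isomorphic to $(\End_Q(M),\ad_h)$. In this description:
\begin{itemize}
\item the multipliers $\G(A,\s)$ are exactly the $\lambda\in\mg{K}$ with $\lambda h\simeq h$, since a similitude with multiplier $\lambda$ is the same as an isometry $h\to\lambda h$;
\item $\s$ is hyperbolic if and only if $h$ is hyperbolic, in the sense of admitting a totally isotropic $Q$-submodule of half rank (cf.\ \cite[\S6]{KMRT98}).
\end{itemize}
All of this is compatible with scalar extension to any field $L/K$, giving $(A_L,\s_L)\simeq(\End_{Q_L}(M_L),\ad_{h_L})$.

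Second, I invoke Jacobson's theorem. The map sending a hermitian form $g$ over $(Q,\can_Q)$ to the quadratic form $q_g(x)=g(x,x)\in K$ on the underlying $K$-space is well defined on isometry classes, respects orthogonal sums and scaling by $\lambda\in\mg{K}$, and is injective on isometry classes. Moreover $q_{\la a\ra_Q}=a\pi$, hence $q_h\simeq\vf\otimes\pi$. It also sends hyperbolic forms to hyperbolic forms and conversely. The converse follows from injectivity together with a dimension count: if $q_h$ is hyperbolic, then $q_h\simeq q_{h'}$ for $h'$ the hyperbolic hermitian form of the same rank.

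Jacobson's theorem holds over any field $L/K$, including when $Q_L$ is split; in that case it reduces to Morita equivalence with symmetric bilinear forms. Applying it over each $L$ gives:
\begin{itemize}
\item $\lambda h\simeq h$ if and only if $\lambda(\vf\otimes\pi)\simeq\vf\otimes\pi$, so $\G(A,\s)=\G(\vf\otimes\pi)$;
\item for every finite extension $L/K$, $\s_L$ is hyperbolic if and only if $(\vf\otimes\pi)_L$ is hyperbolic, i.e.\ $\ad_{(\vf\otimes\pi)_L}$ is hyperbolic.
\end{itemize}
The second point shows that the generating sets of norm groups coincide, so $\Hyp(A,\s)=\Hyp(\vf\otimes\pi)$. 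Since the arguments are uniform in the base field, the same equalities hold after any extension $K'/K$.

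Finally, for the $R$-equivalence statement, I apply \Cref{mer} to $(A_{K'},\s_{K'})$. I also apply Merkurjev's analogous description \cite[Theorem 1]{Mer96} for the adjoint group of the even-dimensional orthogonal involution $\ad_{\vf\otimes\pi}$, namely
$$\G(\vf\otimes\pi)/\sq{K}\cdot\Hyp(\vf\otimes\pi).$$
Comparing the two quotients via the equalities established above yields the isomorphism for every $K'$.

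I expect the main obstacle to be the careful justification that hyperbolicity is detected correctly on both sides after arbitrary finite extensions $L$, especially where $Q_L$ splits. A related point is making precise which orthogonal group ${\bf PSim}(\vf\otimes\pi)$ denotes in the last step, so that Merkurjev's formula applies to it. The first thing I would try is to treat the split case of Jacobson's correspondence explicitly via Morita equivalence, checking that it matches $\vf\otimes\pi\simeq 2\times(\vf\otimes\hh)$ over such $L$.
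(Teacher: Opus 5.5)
Your argument is correct in substance, but it takes a different route from the paper's.

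\textbf{Comparison with the paper.} The paper never passes to hermitian forms. It quotes \cite[Theorem 4.6]{BU18}: $\Ad(\rho)\otimes(Q,\can_Q)$ is hyperbolic if and only if $\rho\otimes\pi$ is. Applying this to $\rho=\vf_L$ matches the generating sets of the $\Hyp$-groups. The paper then gets the multiplier equality by applying the same criterion to $\rho=\lla\lambda\rra\otimes\vf$. This uses $\Ad(\lla\lambda\rra)\otimes(A,\s)\simeq\Ad(\lla\lambda\rra\otimes\vf)\otimes(Q,\can_Q)$. It also uses the implicit standard fact that $\lambda\in\G(A,\s)$ if and only if $\Ad(\lla\lambda\rra)\otimes(A,\s)$ is hyperbolic. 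In your language, that fact says $h\perp -\lambda h$ is hyperbolic iff $\lambda h\simeq h$, by Witt cancellation.

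You instead get both equalities from the injectivity of Jacobson's map $h\mapsto q_h$. You obtain the multipliers directly from $\lambda h\simeq h\iff\lambda q_h\simeq q_h$, with no $\lla\lambda\rra$-twist. Your version is more self-contained, since it essentially reproves the hyperbolicity transfer that the paper cites. The paper's version is shorter and reduces everything to a single hyperbolicity criterion, which works even when no full isometry classification like Jacobson's is available. The final appeal to \Cref{mer} is the same in both.

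\textbf{Two details need repair.}
\begin{enumerate}
\item In the split case, Morita equivalence takes hermitian forms over $(M_2(L),\can)$ to \emph{alternating} bilinear forms, not symmetric ones, because the canonical involution is adjoint to a skew form. This is exactly why all hermitian forms of a given rank are then isometric and hyperbolic, matching the hyperbolicity of $(\vf\otimes\pi)_L$. So injectivity is trivial there.
\item Your dimension count for ``$q_h$ hyperbolic $\Rightarrow$ $h$ hyperbolic'' presupposes a hyperbolic hermitian form of the same rank. No such form exists when $Q_L$ is a division algebra and $\dim\vf$ is odd. In that case you must check separately that $(\vf\otimes\pi)_L$ is not hyperbolic. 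This holds because $\vf\otimes\pi\equiv a_1\pi\mod\I^3L$ for any diagonal entry $a_1$ of $\vf$, while the anisotropic form $\pi_L$ is not in $\I^3L$ by the Arason--Pfister Hauptsatz.
\end{enumerate}

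\textbf{On the point you flagged at the end.} Read ${\bf PSim}(\vf\otimes\pi)$ as the connected group. Merkurjev's formula then involves multipliers of proper similitudes. For the split algebra $\Ad(\vf\otimes\pi)$ these exhaust $\G(\vf\otimes\pi)$, since composing a similitude with a reflection switches properness without changing the multiplier.
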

\begin{proof}
 By \cite[Theorem 4.6]{BU18}, for any quadratic form $\rho$ over $K$ we have that  $\Ad(\rho)\otimes (Q,\can_Q)$ is hyperbolic if and only if $\rho\otimes \pi$ hyperbolic. Since by \cite[Proposition 3.2]{BU18}, $\Ad(  \vf\otimes \pi )\simeq \Ad( \vf )\otimes \Ad( \pi )$ holds, we have, for any $\lambda\in \mg{K}$ that $\Ad(\lla \lambda\rra )\otimes (A,\s)\simeq \Ad(\lla \lambda \rra \otimes \vf)\otimes (Q,\can_Q)$, whereby $\Ad(\lla \lambda \rra)\otimes (A,\s)$ is hyperbolic if and only if $\lla \lambda \rra \otimes \vf\otimes \pi$ is hyperbolic. Thus, $\G(A,\s)=\G(\vf\otimes \pi)$.

     Note that, for any finite field extension $L/K$,  we have $(\vf\otimes \pi)_L$ is hyperbolic if and only if $\s_L$ is hyperbolic. Hence $\Hyp(A_L,\s_L)$ and $ \Hyp((\vf\otimes \pi)_L)$ are generated by the subgroups $\N_{L/K}^*$ for the same extensions $L/K$. Thus, we have $\Hyp(A,\s)=\Hyp(\vf\otimes \pi)$.

     Hence for any field extension $K'/K$,  we have $\G(A_{K'},\s_{K'})=\G((\vf\otimes \pi)_{K'})$ and $\Hyp(A_{K'},\s_{K'})=\Hyp((\vf\otimes \pi)_{K'})$. Thus by \Cref{mer}, we obtain that ${\bf PSim}(A,\s)(K')/R\simeq {\bf PSim}(\vf\otimes\pi)(K')/R\,.$
\end{proof}

\begin{thm}\label{4}
    Let $(A,\s)$ be $K$-algebra with symplectic involution of degree $8$. Assume that $\ind(A)\leq 2$. Then ${\bf PSim}(A,\s)$ is $R$-trivial.
\end{thm}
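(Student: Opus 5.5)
The plan is to reduce \Cref{4} to a statement about a $16$-dimensional quadratic form divisible by a $2$-fold Pfister form, via \Cref{initial}, and then verify Merkurjev's criterion (\Cref{mer}) using Arason--Pfister type arguments.

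\emph{Step 1: reduction to a quadratic form.} Since $\deg(A)=8$ and $\ind(A)\leq 2$, we have $A\simeq M_4(Q)$ for a $K$-quaternion algebra $Q$ (take $Q$ split if $A$ is split). A symplectic involution on $M_4(Q)$ is adjoint to a hermitian form of rank $4$ over $(Q,\can_Q)$. The $\can_Q$-symmetric elements of $Q$ are exactly those of $K$, so this form diagonalises as $\la a_1,\dots,a_4\ra$ with $a_i\in\mg{K}$. Hence $(A,\s)\simeq\Ad(\vf)\otimes(Q,\can_Q)$ with $\vf=\la a_1,\dots,a_4\ra$. This description is stable under base change. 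By \Cref{initial} and \Cref{mer}, it therefore suffices to show, for every extension $K'/K$ (renamed $K$), that $\G(\psi)\subseteq\sq{K}\cdot\Hyp(\psi)$ where $\psi=\vf\otimes\pi$ and $\pi=\N_Q$. Scaling $\vf$ changes neither $\G(\psi)$ nor $\Hyp(\psi)$, so we may assume $\vf=\la 1\ra\perp\vf'$. Put $d=\disc(\vf)$.

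\emph{Step 2: the case $d\in\sq{K}$.} Then $\vf\in I^2K$, so $\psi\in I^4K$ has dimension $16$. By Arason--Pfister, $\psi$ is hyperbolic or similar to an anisotropic $4$-fold Pfister form $\rho$.
\begin{itemize}
\item If $\psi$ is hyperbolic, then $\Hyp(\psi)=\mg{K}$.
\item Otherwise $\G(\psi)=\G(\rho)$, which equals the set of values represented by $\rho$.
\end{itemize}
Every value $\mu$ represented by $\rho$ is a norm from a quadratic extension over which $\rho$ becomes isotropic, hence hyperbolic. To see this, write $\mu=x^2+c$ with $c$ represented by the pure part $\rho'$; then $\rho$ is isotropic over $K(\sqrt{-c})$, and $\mu=\N(x+\sqrt{-c})$. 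Since $\psi$ is similar to $\rho$, it is hyperbolic over $K(\sqrt{-c})$ as well, so $\G(\psi)\subseteq\Hyp(\psi)$.

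\emph{Step 3: the general case.} We have $\vf\equiv\lla d\ra\ra$-type correction modulo $I^2$; more precisely, $\psi\equiv\lla d\rra\otimes\pi \mod I^4K$ up to sign conventions. Let $\lambda\in\G(\psi)$, so $\lla\lambda\rra\otimes\psi$ is hyperbolic. Then $\lla\lambda,d\rra\otimes\pi\in I^5K$ has dimension $16<32$, so it is hyperbolic by Arason--Pfister. Thus $\lambda$ is represented by the $3$-fold Pfister form $\lla d\rra\otimes\pi$. By the argument of Step 2, $\lambda$ is a product of a square and norms from quadratic extensions $M=K(\sqrt{-c})$ over which $\lla d\rra\otimes\pi$ is hyperbolic.

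Over such an $M$, one of two things happens. Either $\pi_M$ is hyperbolic, in which case $\psi_M$ is hyperbolic and we are done. Or $d\in\sq{M}$, in which case $\psi_M\in I^4M$ and we are in the situation of Step 2 over $M$.

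\emph{Main obstacle.} The difficulty is this last descent. We need to show that $\lambda$, written as $\N_{M/K}(z)$, can be modified by squares and elements of $\N_{M/K}(\Hyp(\psi_M))\subseteq\Hyp(\psi)$ so that the remaining factor comes from extensions where $\psi$ itself is hyperbolic. My first attempt would be to choose the representation of $\lambda$ by $\lla d\rra\otimes\pi$ cleverly. Using $\lla\lambda\rra\otimes\psi=0$, I would pick $c$ so that $\lambda\in\G(\psi_M)$ with $z\in\G(\psi_M)$. Step 2 over $M$ then gives $z\in\sq{M}\cdot\Hyp(\psi_M)$, and taking norms yields $\lambda\in\N_{M/K}(\sq{M})\cdot\Hyp(\psi)$.

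The factor $\N_{M/K}(\sq{M})$ still has to be absorbed. Failing a direct argument, I would use the norm principle for $\G(\psi)$ modulo $\Hyp$ along quadratic extensions $K(\sqrt d)/K$ (Scharlau transfer), reducing to Step 2 over $K(\sqrt d)$.
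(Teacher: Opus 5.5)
\textbf{There is a genuine gap.} Your Steps~1 and~2 are correct, and so is the first half of Step~3. From the hyperbolicity of $\lla\lambda\rra\otimes\psi$ and $\psi\equiv\lla d\rra\otimes\pi$ modulo $\I^4K$, the Hauptsatz gives $\lambda\in\G(\lla d\rra\otimes\pi)$. However, Step~2 only covers the easy case $d\in\sq K$. The case $d\notin\sq K$, which is the real content of the theorem, is not proved.

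The dichotomy you use over $M$ is false. Hyperbolicity of $\lla d\rra\otimes\pi$ over $M$ amounts to the vanishing of $(d)\cup[Q_M]$ in $H^3(M,\mu_2)$, and this forces neither $[Q_M]=0$ nor $d\in\sq M$. For instance, $\lla 2,-1,-1\rra$ is hyperbolic over $\mathbb{Q}$, since it is isotropic at $\mathbb{R}$ and at every $\mathbb{Q}_p$. Hence it is hyperbolic over $M=\mathbb{Q}(\sqrt3)$, yet $\lla -1,-1\rra$ is anisotropic over $M$ and $2\notin\sq M$. So in general $\psi_M\notin\I^4M$, and Step~2 is unavailable over $M$.

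Even when Step~2 is available, the descent is missing. You need some $z\in\G(\psi_M)$ with $\lambda\in\N_{M/K}(z)\sq K$, and nothing in your argument produces such a $z$. Applying Step~2 to $\lambda$ itself over $M$ and taking norms only gives information about $\N_{M/K}(\lambda)=\lambda^2$. The factor $\N_{M/K}(\sq M)$ you worry about is harmless, since $\N_{M/K}(w^2)=\N_{M/K}(w)^2\in\sq K$. The fallback through a norm principle along $K(\sqrt d)/K$ does not get started either, because $\lambda$ need not be a norm from $K(\sqrt d)$; for example, if $\pi$ is hyperbolic then $\G(\psi)=\mg K$.

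\textbf{The missing idea} is to split $\psi$ in the Witt ring, which is possible because $\dim\vf=4$. For $\vf=\la a_1,\dots,a_4\ra$ and $d=a_1a_2a_3a_4$ one has $\vf\sim a_1\lla -a_1a_2,-a_1a_3\rra\perp a_4\lla d\rra$. Hence $\psi\sim\alpha\perp\beta$, where $\alpha=a_1\lla -a_1a_2,-a_1a_3\rra\otimes\pi$ is a scaled $4$-fold Pfister form and $\beta=a_4\lla d\rra\otimes\pi$ is a scaled $3$-fold Pfister form. The paper concludes from this decomposition by quoting the $R$-triviality result \cite[Theorem~3.5]{AB25} for such forms.

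Once you have this decomposition, your own tools also finish the job:
\begin{enumerate}
\item Your Hauptsatz argument, applied to $\lla\lambda\rra\otimes\psi\sim\lla\lambda\rra\otimes\alpha\perp\lla\lambda\rra\otimes\beta$, shows that $\lla\lambda\rra\otimes\beta$ is hyperbolic, and then so is $\lla\lambda\rra\otimes\alpha$. Thus $\lambda$ is represented by both underlying Pfister forms.
\item If one of these Pfister forms is hyperbolic, then $\psi$ is Witt equivalent to the other scaled form (or is hyperbolic), and your Step~2 argument applies.
\item Otherwise, for $\lambda\notin\sq K$, your represented-values trick yields quadratic extensions $L,L'$ with $\beta_L$ and $\alpha_{L'}$ hyperbolic and $\lambda\in\N_{L/K}^\ast\cap\N_{L'/K}^\ast$. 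If $L=L'$, then $\psi_L$ is hyperbolic. If $L\neq L'$, then $\psi$ is hyperbolic over the biquadratic extension $M=LL'$, and $\lambda\in\sq K\cdot\N_{M/K}^\ast$ by \cite[Lemma~3.3]{AB25}.
\end{enumerate}
As it stands, your proposal proves the theorem only when $\disc(\vf)$ is trivial.
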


\begin{proof}
As $(A,\s)$ is an algebra with symplectic involution with $\ind(A)\leq 2$, by \cite[Prop. 3.4]{BU18} we have $(A,\s)\simeq \Ad(\vf)\otimes (Q,\can_Q)$ for some quadratic form $\vf$ over $K$ and a quaternion algebra $Q$ over $K$. Write $Q=(a,b)$ with $a,b\in\mg{K}$. 
Its norm form is $\lla a,b\rra$.
Let $a_1,\dots,a_4\in\mg{K}$ be such that $\vf\simeq \la a_1,a_2,a_3,a_4\ra$.
Let $\psi=\lla a, b \rra \otimes \vf$. Now observe that $\la a_1, a_2, a_3, a_4\ra$ is Witt equivalent to $\la a_1, a_2, a_3, a_4\ra\perp \la a_1a_2a_3,-a_1a_2a_3\ra$, which is equal to $a_1\lla -a_1a_3,-a_1a_2\rra\perp a_4\lla a_1a_2a_3a_4\rra$. It follows that $\psi$ is Witt equivalent to a sum of a scaled $4$-fold and a scaled $3$-fold Pfister form. By \cite[Theorem 3.5] {AB25} we obtain that ${\bf PSim}(\psi)$ is $R$-trivial. Thus by \Cref{initial}, we conclude that ${\bf PSim}(A,\s)$ is $R$-trivial.
\end{proof}

A construction from \cite{BMT04} leads to examples of $K$-algebras with symplectic involution $(A,\s)$ where $\deg A=8$, $\ind A=4$ and ${\bf PSim}(A,\s)$ is not $R$-trivial.

\begin{ex}\label{8-odd multiple}
   Let $K=K_0(a,b)$, the rational function field in two variables $a,b$ over a subfield $K_0$.
   Consider $(B,\tau)$ be a central simple $K_0$-algebra of degree $4$ and $\ind(B)=2$ with an orthogonal involution $\tau$ of nontrivial discriminant. Let $Q=(a,b)$ and let $\can_Q$ be its canonical involution.
  
   Set $A=B\otimes Q$ and $\s=\tau\otimes \can_Q$. 
   Then $\deg(A)=8$, $\ind(A)= 4$ and $\s$ is a symplectic involution on $A$ , by \cite[Prop.~2.23]{KMRT98}. 
   By \cite[Cor.~9]{BMT04}, in this case ${\bf PSim}(A,\s)(K)/R\neq \{1\}$.
\end{ex}

Next we consider symplectic involutions on algebras of degree $12$ with some conditions on cohomological invariants. Let $n\in\nat$. 
We denote by $\I^nK$ the $n$th power of the fundamental ideal $\I K$ of the Witt ring of $K$.
We write $\vf\in\I^n K$ if the Witt equivalence class of $\vf$ lies in $\I^n K$.
Note that $\vf\in\I^2K$ if and only if $\vf$ has even dimension and trivial discriminant. 

\begin{prop}\label{index}
Let $\pi$ be a Pfister form and $\psi$ be an even-dimensional quadratic form over $K$. Let $\vf=\pi\otimes \psi$. Then, for every field extension $L/K$, there exists a quadratic form $\theta$ over $L$ such that $\dim\theta\equiv \dim\psi  \mod 2$ and $(\vf_L)_{\an}=\pi_L\otimes \theta$.
\end{prop}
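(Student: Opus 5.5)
Since $\pi_L$ is again a Pfister form over $L$ and $\vf_L=\pi_L\otimes\psi_L$, it suffices to treat $L=K$; the general case follows by applying the argument over $L$. The plan is to use the standard fact that the anisotropic part of a multiple of a Pfister form is again a multiple of that Pfister form (cf.\ \cite[Ch.~X]{Lam05}). If $\pi$ is isotropic, then $\pi$ is hyperbolic, so $\vf$ is hyperbolic. In that case $\vf_\an=0=\pi\otimes\theta$ with $\theta=0$, and $\dim\theta=0\equiv\dim\psi\mod 2$ because $\psi$ is even-dimensional. So assume $\pi$ is anisotropic.

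Step 1 (existence of $\theta$). I induct on $\dim\psi$, using the following claim: if $\pi$ is an anisotropic Pfister form and $\pi\otimes\psi$ is isotropic, then $\pi\otimes\psi\simeq \pi\otimes\psi'\perp \pi\otimes\hh$ with $\dim\psi'=\dim\psi-2$. This is the classical result that for anisotropic $\pi$, an isotropic form $\pi\otimes\la b_1,\dots,b_m\ra$ can be rewritten as $\pi\otimes\la c_1,\dots,c_m\ra$ with $c_1=-c_2$. The proof goes as follows. An isotropic vector gives $\sum_i b_ix_i=0$ with $x_i\in D(\pi)\cup\{0\}$ represented by $\pi$. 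One then uses that $\pi\otimes\la b_1,\dots,b_k\ra$ represents $d$ if and only if $d=\sum b_iy_i$ with $y_i\in D(\pi)\cup\{0\}$, together with the multiplicativity of $\pi$: $y\pi\simeq\pi$ for $y\in D(\pi)$. This lets us split off a subform $\pi\otimes\la c,-c\ra\simeq \pi\otimes\hh$. Iterating the claim until the remaining form is anisotropic gives $\vf_\an\simeq\pi\otimes\theta$, where $\dim\theta=\dim\psi-2k$, and $k$ is the number of hyperbolic copies $\pi\otimes\hh$ removed. By Witt cancellation, $\vf_\an$ is then the anisotropic remainder.

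Step 2 (parity). From Step 1 we already get $\dim\theta\equiv\dim\psi\mod 2$. Alternatively, we can compare dimensions directly: $\dim\vf_\an\equiv\dim\vf \mod 2\dim\pi$, since $\vf\simeq\vf_\an\perp \wi(\vf)\times\hh$ and $\wi(\vf)$ is divisible by $\dim\pi$. The divisibility holds because $\vf_\an=\pi\otimes\theta$ forces $\dim\vf_\an$ to be a multiple of $\dim\pi$. Dividing by $\dim\pi$ gives $\dim\theta\equiv\dim\psi \mod 2$.

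The main obstacle is the claim in Step 1, the splitting of a copy of $\pi\otimes\hh$. I would try to cite it directly as \cite[Ch.~X, Thm.~1.8/Cor.]{Lam05}, the structure result for forms divisible by a Pfister form. If I had to prove it, I would pick the isotropic vector, reduce to the case where two of the coefficients $b_ix_i$ are nonzero, and use the representation theorem for $\pi\otimes\la b_1,b_2\ra$ to rewrite the first two slots as $\pi\otimes\la c,-c\ra$. Everything else is bookkeeping of dimensions.
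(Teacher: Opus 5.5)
Your proof is correct, but it does not follow the paper, because the paper gives no argument here and simply cites \cite[Prop.~3.4]{AB26}. You instead prove the statement directly via the classical splitting lemma for forms divisible by an anisotropic Pfister form, and that argument works. Your approach makes the result independent of \cite{AB26}; the citation only buys brevity. You also correctly identify the one place the even-dimensionality of $\psi$ is needed: the case where $\pi_L$ is hyperbolic, which forces $\theta=0$.

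The splitting step is easier than your sketch suggests. Take an isotropic vector $(v_1,\dots,v_m)$ of $b_1\pi\perp\cdots\perp b_m\pi$ and set $x_i=\pi(v_i)$. For every $i$ with $x_i\neq 0$, roundness gives $b_i\pi\simeq b_ix_i\pi$. The diagonal form $\la b_ix_i\ra_{x_i\neq 0}$ is then isotropic (the all-ones vector is isotropic), so it contains $\hh$. This yields $\pi\otimes\psi\simeq\pi\otimes\hh\perp\pi\otimes\psi'$ with $\dim\psi'=\dim\psi-2$ directly, with no need to reduce to two nonzero coefficients.

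One correction: drop the ``alternative'' parity argument in Step 2. Knowing that $\dim\vf$ and $\dim\vf_\an$ are multiples of $\dim\pi$ only gives $2\wi(\vf)\equiv 0\mod\dim\pi$. That is divisibility of $\wi(\vf)$ by $\dim\pi/2$, which says nothing about the parity of $\dim\psi-\dim\theta$. Divisibility of $\wi(\vf)$ by $\dim\pi$ is true, but only because of Step 1: each step removes $\pi\otimes\hh\simeq\dim\pi\times\hh$. So keep your first parity argument, which reads the parity off Step 1 directly.
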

\begin{proof}
    See \cite[Prop.~3.4]{AB26}. 
\end{proof}

\begin{lem}\label{L:beta}
Let $\vf$ be a quadratic form over $K$ and $a\in\G(\vf)$.
Then  $\vf$ is hyperbolic or there exists a quadratic field extension $L/K$ with $\wi(\vf_L)>\wi(\vf)$ and $a\in\N_{L/K}^\ast$.
\end{lem}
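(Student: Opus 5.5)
Assume $\vf$ is not hyperbolic, so $\vf_\an$ is nonzero. The plan is to replace $\vf$ by $\vf_\an$, find an element $d$ represented by $\vf_\an$ for which $d$ and $ad$ span an isotropic-making pair, and take $L=K(\sqrt{\cdot})$ for a suitable square class.

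First, the reduction. Since $a\vf\simeq\vf$, Witt cancellation gives $a\vf_\an\simeq\vf_\an$. Moreover $\wi(\vf_L)>\wi(\vf)$ holds exactly when $(\vf_\an)_L$ is isotropic. So we may assume $\vf$ is anisotropic and nonzero, and we must find a quadratic extension $L/K$ with $\vf_L$ isotropic and $a\in\N_{L/K}^\ast$.

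If $a\in\sq{K}$, take any $d$ represented by $\vf$ and any other $e$ represented by $\vf$ on an orthogonal vector when $\dim\vf\geq 2$. Put $L=K(\sqrt{-de^{-1}})$. Then $\la d,e\ra_L$ is isotropic, and $a$ is a square, hence a norm. If $\dim\vf=1$, then $\vf=\la d\ra$ and $a\vf\simeq\vf$ forces $a\in\sq{K}$. This case is awkward, because $\la d\ra$ never becomes isotropic. This is probably the intended caveat: in the main use the dimension is even, so in the write-up I would assume, or note, that $\dim\vf\geq2$. Alternatively, one allows $L$ to be any quadratic extension when the index can still increase, which fails here, so the lemma should be read with $\vf$ of even dimension, or with this degenerate case excluded.

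Now suppose $a\notin\sq{K}$. Pick $d\in D_K(\vf)$. Since $a\vf\simeq\vf$, also $ad\in D_K(\vf)$. Write $d=\vf(v)$ and $ad=\vf(w)$. The main step is to show that $v,w$ can be chosen so that the restriction of $\vf$ to $\mathrm{span}(v,w)$ is the binary form $\la d,ad\ra$ up to the off-diagonal term. For the plan I would instead use the standard trick with the similarity. Take $f\colon V\to V$ with $B(f x,f y)=aB(x,y)$, and consider the subspace $U=\mathrm{span}(v,f v)$. If $U$ is $1$-dimensional, then $f v=\lambda v$, so $ad=\lambda^2 d$ and $a$ is a square, a contradiction. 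Hence $\vf|_U$ is a binary form representing $d$ and $ad$ with determinant $c=d\cdot ad-B(v,fv)^2$. Choose $L=K(\sqrt{-\det(\vf|_U)})$, the split-determinant extension, so that $\vf|_U$ becomes isotropic over $L$; since $\vf|_U$ is anisotropic over $K$, $L$ really is a quadratic field.

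It remains to show $a\in\N_{L/K}^\ast$, and this is the expected obstacle. Since $\vf|_U\simeq d\la 1,-\delta\ra$ with $-\delta=-\det$ up to squares, i.e. $d$ times the norm form of $L$, the element $d$ and $ad$ are both represented by $d\N_L$. Therefore $a$ is represented by the norm form of $L/K$, so $a\in\N_{L/K}^\ast$. I expect this to be straightforward once $U$ is chosen. If $\vf|_U$ degenerates, that is if $B(v,fv)^2=a d^2$, then $a$ is a square, contradicting the case assumption. This closes the argument.
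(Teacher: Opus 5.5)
Your argument is correct. The paper gives no argument of its own here: its proof is only the citation [AB25, Lemma~3.1]. Your proof therefore supplies a self-contained elementary one.

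\textbf{Reduction.} You pass to $\varphi_{\an}$. Witt cancellation gives $a\varphi_{\an}\simeq\varphi_{\an}$, and $\wi(\varphi_L)>\wi(\varphi)$ holds exactly when $(\varphi_{\an})_L$ is isotropic.

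\textbf{Case $a\in K^{\times 2}$.} Any binary subform $\langle d,e\rangle$ of $\varphi_{\an}$ gives the quadratic field $L=K(\sqrt{-de})$, and $a$ is trivially a norm from $L$.

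\textbf{Case $a\notin K^{\times 2}$.} The span $U$ of $v$ and $fv$ is $2$-dimensional and anisotropic. Hence $\varphi|_U\simeq d\langle 1,\Delta\rangle$ with $\Delta=ad^2-b^2$ and $b=B(v,fv)$. Since $\varphi|_U$ represents $ad$, the field $L=K(\sqrt{-\Delta})$ works. Explicitly, $a=\N_{L/K}\bigl((b+\sqrt{-\Delta})/d\bigr)$.

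Two small points:
\begin{enumerate}
\item In your last paragraph the sign is off. The norm form of $K(\sqrt{-\Delta})$ is $\langle 1,\Delta\rangle$, so the relation should read $-\delta=\det$, not $-\delta=-\det$.
\item The similitude $f$ is not needed. Any $w$ with $\varphi(w)=ad$ lies outside $Kv$ when $a\notin K^{\times 2}$, and the same binary-subform computation applies to the span of $v$ and $w$.
\end{enumerate}

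You are also right that the statement as printed needs an extra hypothesis. Take $\varphi=\langle 1\rangle$ and $a=1$: then $\varphi$ is not hyperbolic, yet $\langle 1\rangle_L$ is anisotropic over every extension $L$. The same failure occurs whenever $\dim\varphi_{\an}=1$. So the lemma should assume $\dim\varphi_{\an}\neq 1$, for instance that $\dim\varphi$ is even. That is how it is used in Lemma~\ref{24}, where every form it is applied to is even-dimensional.
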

\begin{proof}
    See \cite[Lemma~3.1]{AB25}.
\end{proof}

    \begin{lem}\label{24}
      Let $\pi$ a $2$-fold Pfister form. Let $\psi$ be a $6$-dimension quadratic form and $\vf=\pi\otimes \psi$ in $\I^4K$ over $K$. 
      Then, for every $c\in \G(\vf)$, there exists a field extension $M/K$ which is trivial, quadratic or biquadratic such that $\vf_{M}$ is hyperbolic and $c\in\sq{K}\cdot \N_{M/K}^\ast$. In particular, $\G(\vf)=\sq{K}\cdot\Hyp(\vf)$.
    \end{lem}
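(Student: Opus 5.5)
The plan is to apply \Cref{L:beta} repeatedly and use \Cref{index} together with the Arason--Pfister Hauptsatz to control the Witt index at each stage. Over any extension $L/K$, \Cref{index} gives $(\vf_L)_\an\simeq \pi_L\otimes\theta$ with $\dim\theta$ even, since $\dim\psi=6$ is even. Hence $\dim(\vf_L)_\an\in\{0,8,16,24\}$. Since $\vf_L\in\I^4L$, the Hauptsatz excludes the value $8$. Moreover, a $16$-dimensional anisotropic form in $\I^4L$ is similar to an anisotropic $4$-fold Pfister form. So at every stage $(\vf_L)_\an$ is zero, similar to a $4$-fold Pfister form, or of dimension $24$.

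Fix $c\in\G(\vf)$. If $\vf$ is hyperbolic, take $M=K$. Otherwise \Cref{L:beta} gives $L_1=K(\sqrt d)$ with $c\in\N_{L_1/K}^\ast$ and $\wi(\vf_{L_1})>\wi(\vf)$. Then $\dim(\vf_{L_1})_\an\leq 16$. If $\vf_{L_1}$ is hyperbolic we are done with $M=L_1$. It remains to treat the case $\vf_{L_1}\simeq \lambda\rho\perp 4\times\hh$ with $\rho$ an anisotropic $4$-fold Pfister form over $L_1$; the case $\dim\vf_\an=16$ over $K$ itself is handled the same way with $L_1=K$.

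The goal in this case is to find $e\in\mg{K}$ such that $\vf$ becomes hyperbolic over $M=K(\sqrt d,\sqrt e)$ and $c\in\sq{K}\cdot\N_{M/K}^\ast$. Applying \Cref{L:beta} over $L_1$ to $c\in\G(\vf_{L_1})$ only yields a quadratic extension $L_2/L_1$ with $c\in\N_{L_2/L_1}^\ast$, and this tower need not be biquadratic over $K$. Instead I would use the structure more directly. By \Cref{index}, $\rho\simeq\pi_{L_1}\otimes\lla u,v\rra$ for suitable $u,v\in\mg{L_1}$, since $\lambda\rho=\pi_{L_1}\otimes\theta$ with $\theta$ $4$-dimensional of trivial discriminant. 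Then $\vf_{L_1(\sqrt w)}$ is hyperbolic for any slot $w$ that $\rho$ absorbs. I would also exploit that $c\in\N_{L_1/K}^\ast$ means $\lla c,d\rra$ is hyperbolic.

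The main obstacle is the descent: choosing the slot $w$ in $K$ modulo $\sq{L_1}$, and then adjusting $c$ by a square and a norm from $K(\sqrt e)$. For this I would try writing $c=\N_{L_1/K}(z)$ and multiplying by scalars from $\G(\rho)=D(\rho)$, using the norm principle for Pfister forms (Scharlau transfer of $\vf\perp -c\vf$ along $L_1/K$), so as to land in $\sq{K}\cdot\N_{M/K}^\ast$.

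Once such an $M$ is found for every $c$, each $M$ is a finite extension with $\vf_M$ hyperbolic. Hence $\N_{M/K}^\ast\subseteq\Hyp(\vf)$, which gives $\G(\vf)\subseteq\sq{K}\cdot\Hyp(\vf)$. The reverse inclusion is the general one, so $\G(\vf)=\sq{K}\cdot\Hyp(\vf)$.
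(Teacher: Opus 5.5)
\textbf{There is a genuine gap.} Your dimension analysis is correct: $\dim(\vf_L)_\an\in\{0,16,24\}$ over every $L$. The first step via \Cref{L:beta} is also correct. But the case that carries the whole content of the lemma is never proved. This is the case $\dim\vf_\an=24$ with $\vf_{L_1}$ not hyperbolic. For it you only describe what you ``would try'': pick a slot of $\rho$ that descends to $K$, then correct $c$ by elements of $D(\rho)$ via a norm principle or transfer. You give no procedure that produces a slot $e\in\mg{K}$. You also give no mechanism forcing $c$ into $\sq{K}\cdot\N^\ast_{M/K}$ for $M=K(\sqrt d,\sqrt e)$. So the descent problem you correctly identify is left open.

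Some side remarks. If $\dim\vf_\an=16$ over $K$ there is nothing to treat. \Cref{L:beta} already gives $\dim(\vf_{L_1})_\an<16$, so $\vf_{L_1}$ is hyperbolic by the Hauptsatz. Your claim that $\theta$ has trivial discriminant is unjustified. The Pfister structure of $\rho$ is not needed at all.

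\textbf{The missing idea.} The anisotropic part of $\vf_{L_1}$ already descends to $K$, together with the similarity factor $c$. This lets you apply \Cref{L:beta} a second time over $K$ rather than over $L_1$, which removes the tower problem. The steps are as follows.
\begin{enumerate}
\item By \cite[Cor.~22.12]{EKM08}, write $\vf_\an\simeq\lla d\rra\otimes\vartheta\perp\psi'$ with $\psi'$ a form over $K$ such that $\psi'_{L_1}$ is anisotropic. Then $(\vf_{L_1})_\an\simeq\psi'_{L_1}$, so $\dim\psi'=16$.
\item You noted that $\lla c,d\rra$ is hyperbolic, i.e.\ $c\in\G(\lla d\rra)$. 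Together with $c\in\G(\vf_\an)$, Witt cancellation gives $c\in\G(\psi')$.
\item Applying \Cref{L:beta} to $\psi'$ over $K$ gives a quadratic extension $L'/K$ with $\psi'_{L'}$ isotropic and $c\in\N^\ast_{L'/K}$.
\item Since $\psi'_{L_1}$ is anisotropic, $L'\neq L_1$. Hence $M=L_1L'$ is biquadratic, and by \cite[Lemma~3.3]{AB25}, $c\in\N^\ast_{L_1/K}\cap\N^\ast_{L'/K}=\sq{K}\cdot\N^\ast_{M/K}$.
\item Finally, $\vf_M$ is Witt equivalent to the isotropic $16$-dimensional form $\psi'_M$ and lies in $\I^4M$. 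Its anisotropic part therefore has dimension less than $16$, so $\vf_M$ is hyperbolic by the Arason--Pfister Hauptsatz.
\end{enumerate}
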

    \begin{proof}
    Consider $c\in \G(\vf)$. 
    Then $\lla c \rra \otimes \vf $ is isotropic. 
    If $\vf$ is hyperbolic, then choose $M=K$.
    
    Assume now that $\vf$ is not hyperbolic.
Then, by \Cref{L:beta}, there exists a quadratic field extension $L/K$ such that  $\wi(\vf_L)>\wi(\vf)$ and $c\in\N_{L/K}^\ast$.
    Let $d\in\mg{K}\setminus\sq{K}$ be such that $L=K(\sqrt{d})$.
    If $\vf_L$ is hyperbolic, then take $M=L$. Assume now that $\vf_L$ is not hyperbolic.
    By \cite[Cor.~22.12]{EKM08}, we can write $\vf_\an\simeq \lla d\rra\otimes \vartheta\perp\psi$ for two quadratic forms $\vartheta$ and $\psi$ over $K$ such that $\psi_L$ is anisotropic. Then $(\vf_L)_\an=\psi_L$. Now by \Cref{index}, there exists a quadratic form $\eta$ over $L$  with $\dim(\eta)\leq 4$ such that $\psi_L=(\vf_L)_{\an}=\pi\otimes \eta$.
    We have that $c\in\G(\vf)\cap\G(\lla d\rra)\subseteq\G(\psi)$ and $\dim(\psi)\leq 16$. By \Cref{L:beta}, there exists a quadratic extension $L'/K$ such that $\psi_{L'}$ is isotropic and $c\in\N_{L'/K}^\ast$. Since $\psi_{L}$ is anisotropic, it follows that $L/K$ and $L'/K$ are linearly disjoint.
    We set $M=L\otimes_KL'$.
    By \cite[Lemma~3.3]{AB25}, we have  $\N_{L/K}^\ast\cap\N_{L'/K}^\ast=\sq{K}\cdot\N_{M/K}^\ast$.
    Since $\psi_{L'}$ is isotropic, so is $\psi_M$. 
    Since $\vf_M$ is Witt equivalent to $\psi_M$, we have $\dim((\vf_M)_\an)<\dim(\psi)=16$.
    By our assumption, $\vf_M\in \I^4M$. Hence, by the Arason-Pfister Hauptsatz \cite[Theorem 23.7]{EKM08} we conclude that $\vf_M$ is hyperbolic. Therefore we have  $c\in\sq{K}\cdot\N_{M/K}^\ast\subseteq \sq{K}\cdot\Hyp(\vf)$.
\end{proof}
The cohomology sets $H^1(K, \mu_2)$ and $H^2(K, \mu_2)$ are identified with the quotient $\mg{K}/\sq{K}$ and with the $2$-torsion subgroup of the
Brauer group $\Br_2(K)$ respectively. Let $(A,\s)$ be a central simple algebra with symplectic involution over $K$. Then $[A]\in \Br_2(K)$. When $2\ind(A)$ divides $\deg(A)$, the discriminant of $(A,\s)$ is defined in \cite{BMT03} as a class in the cohomology set $H^3(K,\mu_2),$ and is denoted by $\Delta(\s)$. 
\begin{thm}\label{6}
    Let $(A,\s)$ be a central simple algebra of $\deg(A)=12$ with symplectic involution. Assume that $\ind(A)\leq 2$ and trivial discriminant. Then  ${\bf PSim}(A,\s)$ is $R$-trivial.
\end{thm}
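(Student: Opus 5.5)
Since $\ind(A)\leq 2$, \cite[Prop.~3.4]{BU18} gives $(A,\s)\simeq\Ad(\psi)\otimes(Q,\can_Q)$ for a $6$-dimensional quadratic form $\psi$ over $K$ and a $K$-quaternion algebra $Q$. We set $\pi=\N_Q$, a $2$-fold Pfister form, and $\vf=\pi\otimes\psi$, which has dimension $24$. By \Cref{initial}, for every field extension $K'/K$ we have ${\bf PSim}(A,\s)(K')/R\simeq{\bf PSim}(\vf)(K')/R$. Combined with \Cref{mer}, it therefore suffices to show $\G(\vf_{K'})=\sq{K'}\cdot\Hyp(\vf_{K'})$ for every $K'/K$. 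This is exactly the conclusion of \Cref{24}, provided $\vf_{K'}\in\I^4K'$. Since the hypotheses are stable under scalar extension ($\ind$ can only drop, and the discriminant of $\s_{K'}$ is the restriction of $\Delta(\s)$), it is enough to prove $\vf\in\I^4K$ over $K$ itself.

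The main step is to translate triviality of $\Delta(\s)\in H^3(K,\mu_2)$ into a statement about $\vf$. The form $\vf=\pi\otimes\psi$ lies in $\I^2K$, and in fact in $\I^3K$ when $\psi$ has trivial discriminant. I would first treat the discriminant of $\psi$. If $\ind(A)=2$, the condition $2\ind(A)\mid\deg(A)$ holds because $4\mid 12$, so $\Delta(\s)$ is defined. From the construction in \cite{BMT03}, for $(A,\s)=\Ad(\psi)\otimes(Q,\can_Q)$ with $\dim\psi$ even, I expect
\[
\Delta(\s)=(\disc\psi)\cup[Q]=e_3(\pi\otimes\la 1,-\disc\psi\ra)
\]
modulo appropriate indeterminacy, and more generally that $\Delta(\s)$ is the Arason invariant $e_3(\vf)$ when $\vf\in\I^3K$. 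I would try to identify $\Delta$ via its defining property in \cite{BMT03}: the algebra with involution becomes hyperbolic, or $\vf$ is of a form controlled by $e_3$, over the function field of the relevant variety.

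In the case $\disc\psi\neq 1$, note that $\pi\otimes\psi$ is Witt equivalent to $\pi\otimes\psi'\perp\pi\otimes\la 1,-\disc\psi\ra$-type terms, with $\psi'\in\I^2K$, and then $\pi\otimes\psi'\in\I^4K$. So $\vf\equiv\pi\otimes\lla\disc\psi\rra$ modulo $\I^4K$, up to sign and scaling. Triviality of $\Delta(\s)$ then means this $3$-fold Pfister form has trivial $e_3$, hence is hyperbolic by the Arason–Pfister Hauptsatz, and so $\vf\in\I^4K$. In the split case $\ind(A)=1$ we may take $Q$ split, so $\pi$ is hyperbolic and $\vf$ is hyperbolic; this is trivially in $\I^4K$, and \Cref{24} applies (or one uses directly that the symplectic group is split).

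The expected obstacle is pinning down the precise relation between $\Delta(\s)$ and $e_3$ of $\pi\otimes\psi$, including the ambiguity in the choice of $\psi$ and $Q$: $\psi$ is only determined up to similarity and twisting by $Q$. I would handle this by showing the class $\pi\otimes\psi$ mod $\I^4$ is independent of these choices, or simply by citing the computation of $\Delta$ for decomposable involutions in \cite{BMT03}. Once $\vf\in\I^4K$ is established, \Cref{24} gives $\G(\vf_{K'})=\sq{K'}\cdot\Hyp(\vf_{K'})$ for all $K'$, and \Cref{initial} together with \Cref{mer} yields the $R$-triviality.
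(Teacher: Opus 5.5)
Your strategy is the paper's: decompose $(A,\s)\simeq\Ad(\psi)\otimes(Q,\can_Q)$ by \cite[Prop.~3.4]{BU18}, then use \Cref{initial}, \Cref{mer} and \Cref{24} to reduce to showing $\vf=\pi\otimes\psi\in\I^4K$. That property persists over every $K'/K$, so you do not need to check the hypotheses again over $K'$. Finally, $\psi\equiv\lla\disc\psi\rra\mod\I^2K$ gives $\vf\equiv\pi\otimes\lla\disc\psi\rra\mod\I^4K$, with no sign or scaling ambiguity. (A small slip: $\vf$ lies in $\I^3K$ always, since $\psi\in\I K$; what trivial $\disc\psi$ gives is $\vf\in\I^4K$.) However, the two steps that carry the actual content are not justified as written.

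\textbf{The discriminant formula.} You leave $\Delta(\s)=(\disc\psi)\cup[Q]$ as an expectation and propose to extract it from the construction in \cite{BMT03}, worrying about the choice of $(\psi,Q)$. The paper instead cites the Main Theorem of \cite{ST04}, which computes the discriminant of such a decomposable symplectic involution. Applying it to any one decomposition suffices, so the choice of $(\psi,Q)$ causes no difficulty.

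\textbf{The step from $e_3=0$ to hyperbolicity.} Your inference that trivial $e_3$ of $\pi\otimes\lla\disc\psi\rra$ makes it hyperbolic by the Arason--Pfister Hauptsatz is a genuine gap. The Hauptsatz only says that a nonzero anisotropic form in $\I^4K$ has dimension at least $16$. It therefore applies only once you know $\lla a,b,c\rra\in\I^4K$, where $Q=(a,b)$ and $c=\disc\psi$. To get from $(a)\cup(b)\cup(c)=0$ in $H^3(K,\mu_2)$ to $\lla a,b,c\rra\in\I^4K$, you need two inputs:
\begin{enumerate}
\item the injectivity of the norm residue homomorphism $k_3K\to H^3(K,\mu_2)$ (Merkurjev--Suslin \cite{MS91});
\item Milnor's homomorphism $k_3K\to\I^3K/\I^4K$, $\{a,b,c\}\mapsto\lla a,b,c\rra$ \cite[Thm.~4.1]{Mil70}.
\end{enumerate}
These are exactly what the paper invokes. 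After that, the Hauptsatz (dimension $8<16$) gives hyperbolicity. With these citations supplied, your argument coincides with the paper's proof.
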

\begin{proof}
     As $(A,\s)$ be a central simple algebra with symplectic involution with $\ind(A)\leq 2$, by \cite[Prop. 3.4]{BU18} we have $(A,\s)\simeq \Ad(\vf)\otimes (Q,\can_Q)$ for some $6$-dimensional quadratic form $\vf$ over $K$ and $Q$ a quaternion algebra over $K$. 
     Write $Q=(a,b)$, with $a,b\in\mg{K}$. Then its norm form is $\lla a,b\rra$.
 
Let $\psi:=\lla a, b \rra \otimes \vf$.
Denote $\disc(\vf)=c\sq{K}$. 
Then $\vf \equiv \lla c \rra \mod \I^2K$ and hence $\psi=\lla a, b\rra \otimes \vf \equiv \lla a, b ,c\rra \mod \I^4K$. 
By \cite[Main Theorem]{ST04}, we have $\Delta(\s)=[Q]\cup(\disc(\vf))$ in $H^3(K,\mu_2)$.
By our hypothesis, $\Delta(\s)=0$.
Hence 
$(a)\cup(b)\cup (c)=(\disc(\vf))\cup[Q]=0$.   
Hence, by \cite[Theorem 5.1]{MS91} together with \cite[Theorem 4.1]{Mil70}, we have that $\lla a,b,c\rra$ is hyperbolic.

Hence, $\psi\in \I^4K$. 
By \Cref{24}, it follows that $\G(\psi)=\sq{K}\cdot\Hyp(\psi)$. 
By \Cref{initial}, we have $\G(A,\s)=\G(\psi)$ and $\Hyp(A,\s)=\Hyp(\psi)$. 

 For any field extension $K'/K$, if $c\in \G(\psi_{K'})$ by \Cref{24}, then there exists a field extension $M/K'$ which is trivial, quadratic or biquadratic such that $\psi_M$ is hyperbolic and $c\in\sq{K'}\cdot \N_{M/K'}^\ast$. In particular, $\G(\psi_{K'})=\sq{K'}\cdot\Hyp(\psi_{K'})$ and hence ${\bf PSim}(\psi)$ is $R$-trivial. Thus ${\bf PSim}(A,\s)$ is $R$-trivial.
\end{proof}

\begin{pb}
    Let $(A,\s)$ be a central simple algebra of $\deg(A)=12$ with symplectic involution. Assume that $\ind(A) = 4$. Then either show that ${\bf PSim}(A,\s)$ is $R$-trivial or construct an example to describe ${\bf PSim}(A,\s)$ is not $R$-trivial.
\end{pb}

\subsection*{Acknowledgments}
This work was supported by the \emph{Bijzonder Onderzoeksfonds, University of Antwerp} (project \emph{BOF Opvang MSCA IF}, ID 51418).

\section*{Declarations}

\subsection*{Data availability}
There is no associated data, which is not already contained in the text.

\subsection*{Conflict of interest} The author declares that there is no conflict of interest.

\bibliographystyle{plain}

\end{document}